\theoremstyle{plain}
\newtheorem{theorem}{Theorem}
\newtheorem{lemma}[theorem]{Lemma}
\theoremstyle{definition}
\theoremstyle{remark}
\tikzset{snake it/.style={decorate, decoration=snake}}
\def\dj{d\kern-0.4em\char"16\kern-0.1em}
\def\Dj{\hbox{\raise0.3ex\hbox{-}\kern-0.4em  D}}
\title{An inverse result for\\ Wang's theorem on extremal trees}
\author[1, 2]{Ivan Damnjanovi\'c\thanks{The first author is supported by Diffine LLC.}}
\affil[1]{Faculty of Electronic Engineering, University of Ni\v s}
\affil[2]{Diffine LLC}
\author[3]{\v{Z}arko Ran\dj elovi\'c}
\affil[3]{Centre for Mathematical Sciences, University of Cambridge}
\affil[ ]{{\tt ivan@diffine.com}, {\tt zr233@cam.ac.uk}}
\date{}
\begin{document}

\maketitle

\begin{abstract}
Among all trees on $n$ vertices with a given degree sequence, how do we maximise or minimise the sum over all adjacent pairs of vertices $x$ and $y$ of $f(\deg x, \deg y)$? Here $f$ is a fixed symmetric function satisfying a `monotonicity' condition that
\[
    f(x, a) + f(y, b) > f(y, a) + f(x, b) \quad \mbox{for any $x > y$ and $a > b$} .
\]
These functions arise naturally in several areas of graph theory, particularly chemical graph theory.

Wang showed that the so-called `greedy' tree maximises this quantity, while an `alternating greedy' tree minimises it. Our aim in this paper is to solve the inverse problem: we characterise precisely which trees are extremal for these two problems.

\bigskip\noindent
{\bf Mathematics Subject Classification:} 05C35, 05C05, 05C07, 05C92.\\
{\bf Keywords:} tree, degree sequence, adjacent vertices, graph invariant, algorithm, construction, extremal problem.
\end{abstract}

\pagebreak
\section{Introduction}\label{intro}

Let $T_D$ denote the set of all trees on $n$ vertices that have a fixed degree sequence
$D = ( d_1, d_2, \ldots, d_n )$. For a symmetric function $f \colon \mathbb{N} \times \mathbb{N} \to \mathbb{R}$, and a tree $T$ in $T_D$, we write
 $R_f(T)$ for the sum 
 \begin{equation}\label{rf_def}
    R_f(T) = \sum_{u \sim v} f(\deg_G(u), \deg_G(v)).
\end{equation}
Such graph invariants are very natural in their own right, and are studied considerably in chemical graph theory, where they are typically referred to as topological indices and are applied to describe a particular structural property of a given graph of interest \cite{Gutman1, Randic, Fajtlowicz, Milicevic, Zhou, Gutman2, Gutman3}. See Table \ref{ti_examples} for some examples.

\begin{table}[H]
\begin{center}
{\scriptsize
\begin{tabular}{lll}
\toprule $R_f$ & $f(x, y)$\\
\midrule
Randi\'c index & $\dfrac{1}{\sqrt{xy}}$\\[1.3em]
first Zagreb index & $x + y$\\[0.8em]
second Zagreb index & $xy$\\[0.8em]
second modified Zagreb index & $\dfrac{1}{xy}$\\[1.3em]
geometric--arithmetic index & $\dfrac{2\sqrt{xy}}{x+y}$\\[1.3em]
harmonic index & $\dfrac{2}{x+y}$ &\\[1.3em]
sum--connectivity index & $\dfrac{1}{\sqrt{x+y}}$\\[1.3em]
atom--bond connectivity index & $\sqrt{\dfrac{x+y-2}{xy}}$\\[1.3em]
Sombor index & $\sqrt{x^2+y^2}$\\
\bottomrule
\end{tabular}
}
\caption{Some topological indices $R_f$ together with their corresponding functions $f(x, y)$.}
\label{ti_examples}
\end{center}
\end{table}
Actually, usually these functions $f$ have a monotonicity property that we shall refer to as `positive polarity', which says that
\begin{align}
    \label{cond_c2}f(x, a) + f(y, b) \ge f(y, a) + f(x, b) \quad \mbox{for any $x > y$ and $a > b$}.
\end{align}
We will also say that $f$ satisfies `strict positive polarity' if
\begin{align}
    \label{cond_c1}f(x, a) + f(y, b) > f(y, a) + f(x, b) \quad \mbox{for any $x > y$ and $a > b$}.
\end{align}

\pagebreak\noindent
We mention in passing that positive polarity often arises because $f$ is the restriction to $\mathbb{N} \times \mathbb{N}$ of a function $g \colon [1, +\infty) \times [1, +\infty) \to \mathbb{R}$ such that the mixed second derivative $\dfrac{\partial^2 f}{\partial x \, \partial y}(x, y)$ exists and is positive on $(1,\infty) \times (1,\infty)$. For such a $g$ it is easy to check that the restriction $f$ satisfies polarity, and this perhaps explains why polarity is so frequently present.

In an earlier paper, Wang \cite{Wang} investigated the positive polarity functions and provided two algorithms --- one that constructs a single tree maximizing the $R_f$ value on $\mathcal{T}_D$ and another that constructs one or more trees that minimize the $R_f$ value on $\mathcal{T}_D$ \cite[Theorem 1.1]{Wang}, for any such function $f$. Our research is primarily motivated by these two algorithms and it is our central goal to extend the said results by providing a way to construct the full solution set to the according extremal problems. Bearing this in mind, we offer the following two non-deterministic tree construction algorithms that yield a $\mathcal{T}_D$ tree for a given non-increasing degree sequence $D = (d_1, d_2, \ldots, d_n) \in \mathbb{N}^n$ such that $\mathcal{T}_D \neq \varnothing$.

\paragraph{Algorithm 1}
\begin{enumerate}[label=\textbf{(\roman*)}]
    \item Add a new vertex, assign its desired degree value to $d_1$ and assign its availability value to $d_1$ as well.
    \item For $j = \overline{2, n}$, repeat the following steps until an output tree is reached.
    \begin{enumerate}[label=\textbf{(\arabic*)}]
        \item Add a new vertex $u$ and assign its desired degree and availability values both to $d_j$.
        \item Let $X$ be the set of all the vertices different from $u$ that have a positive availability.
        \item Choose a vertex $v$ from $X$ so that this vertex has the greatest possible desired degree among all the vertices from $X$.
        \item Add an edge whose endpoints are the vertices $u$ and $v$ and decrease the availabilities of these two vertices by one.
    \end{enumerate}
\end{enumerate}

\paragraph{Algorithm 2}
\begin{enumerate}[label=\textbf{(\roman*)}]
    \item For each $j = \overline{1, n}$, add some new vertex, assign its desired degree value to $d_j$ and assign its availability value to $d_j$ as well.
    \item Repeat the following steps until exactly $n-1$ edges have been added so that an output tree is reached.
    \begin{enumerate}[label=\textbf{(\arabic*)}]\label{algo_2_thing}
        \item Let the set $X$ comprise all the pairs $(u, v)$ of vertices with positive availabilities such that:
        \begin{enumerate}[label=\textbf{(\alph*)}]
            \item $u$ has the minimum possible desired degree out of all the vertices that have a positive availability;
            \item $u$ and $v$ do not belong to the same component and the sums of availabilities across the respective components where $u$ and $v$ belong are not both equal to one, unless these are the only two components.
        \end{enumerate}
        \item Choose an element of $X$, i.e.\ some $(u_0, v_0) \in X$, so that $v_0$ has the greatest possible desired degree among all the $v$ vertices in the $(u, v)$ pairs of $X$.
        \item Add an edge whose endpoints are the vertices $u_0$ and $v_0$ and decrease the availabilities of these two vertices by one.
    \end{enumerate}
\end{enumerate}

It is worth pointing out that, provided $\mathcal{T}_D \neq \varnothing$, Algorithm 1 is clearly well defined. Also, after each \ref{algo_2_thing} iteration from Algorithm 2, the total availabilities of all the components must always yield a valid tree degree sequence, again due to $\mathcal{T}_D \neq \varnothing$. For this reason, it is not difficult to see that the corresponding set $X$ can never be empty. This observation assures us that Algorithm 2 is also well defined. We now present the main result of the given paper in the form of the following theorem.
\begin{theorem}\label{main_theorem}
    For some $n \in \mathbb{N}$, let $D \in \mathbb{N}^n$ be a non-increasing sequence of $n$ integers such that $\mathcal{T}_D \neq \varnothing$ and let $f \colon \mathbb{N} \times \mathbb{N} \to \mathbb{R}$ be a discrete symmetric function. We then have:
    \begin{enumerate}[label=\textbf{(\roman*)}]
        \item If $f$ is a strict positive polarity function, then a tree $T \in \mathcal{T}_D$ attains the maximum $R_f$ value on $\mathcal{T}_D$ if and only if it is constructible by Algorithm~1 and it attains the minimum $R_f$ value on $\mathcal{T}_D$ if and only if it is constructible by Algorithm~2.
        \item If $f$ is a positive polarity function, then any tree constructible by Algorithm~1 attains the maximum $R_f$ value on $\mathcal{T}_D$ and any tree constructible by Algorithm~2 attains the minimum $R_f$ value on $\mathcal{T}_D$.
    \end{enumerate}
\end{theorem}

The remainder of the paper will focus on providing a full proof of Theorem~\ref{main_theorem}. Its structure will be organized as follows. Section \ref{preliminaries} will serve to introduce certain preliminary remarks, as well as some auxiliary construction-related terms for the purpose of making the rest of the proof more concise and easier to follow. Afterwards, Sections \ref{sc_algo_1} and \ref{sc_algo_2} will be used to prove the validity of Algorithms 1 and 2, respectively. Finally, Section \ref{conclusion} will finish the paper by disclosing a brief conclusion regarding all the newly obtained results and will give some examples that elaborate how the given algorithms can be used.   

\pagebreak
We use standard notation where for a graph $G$, the order is $|G|$ and $E(G)$ is its set of edges. Also, we will consider all graphs to be undirected, finite and simple. Moreover, we shall implement $\deg_G(u)$ in order to signify the degree of some vertex $u$ from the graph $G$. Finally, it is worth pointing out that all results are trivial for $n = 1$, so we will always assume that $n \ge 2$.

\section{Preliminaries}\label{preliminaries}

First of all, it is not difficult to demonstrate that the second claim stated in Theorem \ref{main_theorem} quickly follows from the first. Let $f$ be an arbitrarily chosen positive polarity function and let $D \in \mathbb{N}^n$ be a non-increasing degree sequence such that $\mathcal{T}_D \neq \varnothing$. For any $x, y, a, b \in \mathbb{N}$ such that $x > y$ and $a > b$, we have
\begin{alignat*}{2}
    && (x - y)(a - b) &> 0\\
    \implies \quad && xa + yb &> ya + xb,
\end{alignat*}
which means that for any parameter $t \in \mathbb{R},\, t > 0$, the discrete symmetric function $f_t(x, y) = f(x, y) + txy$ is surely a strict positive polarity function. According to the first statement from Theorem \ref{main_theorem}, we have that any tree $T_0 \in \mathcal{T}_D$ constructible by Algorithm~1 certainly maximizes the $R_{f_t}$ value on $\mathcal{T}_D$, for each $t > 0$. In other words, we get
\begin{equation}\label{aux_1}
    R_{f_t}(T_0) \ge R_{f_t}(T)
\end{equation}
for any $T \in \mathcal{T}_D$ and $t > 0$. Since both sides of Eq.\ (\ref{aux_1}) can be viewed as linear functions in $t$, we are able to simply plug in $t \to 0^+$ in order to reach
\[
    R_f(T_0) \ge R_f(T),
\]
as desired. An analogous argument can be made regarding the $R_f$ minimizing property of any tree constructible by Algorithm 2. Bearing everything in mind, it becomes evident that in order to complete the proof of Theorem \ref{main_theorem}, it is sufficient to prove just the first disclosed statement. For this reason, we shall deal exclusively with strict positive polarity functions $f$ in the remainder of the paper.

Algorithms 1 and 2 represent two tree construction mechanisms that both involve the simple addition of vertices and edges in some particular order. Throughout both algorithms, each vertex is assigned two property values: the desired degree, which signifies the degree that the vertex should have once the construction is completed, and the availability, which determines how many more edges should be incident to the given vertex in order for its degree to match its desired degree, as needed. We will now define certain construction-related auxiliary terms which we will rely on for the sake of making the proof of Theorem \ref{main_theorem} easier to follow. 

We shall refer to the ordered pair $((v_0, v_1, v_2, \ldots, v_{n-1}), (f_1, f_2, \ldots, f_{n-1}))$ as a \emph{scheme} of some tree $T$ of order $n \in \mathbb{N}$ provided that this tree can be obtained via the following simple construction algorithm:
\begin{enumerate}[label=\textbf{(\arabic*)}]
    \item Add the vertex $v_0$.
    \item For each integer $j = \overline{1, n-1}$, add the vertex $v_j$ and an edge whose endpoints are $v_j$ and the previously added vertex $f_j$.
\end{enumerate}

Now, we will use the term \emph{positive availability vertex}, or \emph{PA vertex} for short, to denote a vertex whose availability is greater than zero. If some PA vertex has the greatest desired degree among all the PA vertices, we will then refer to this vertex as a \emph{strong positive availability vertex}, or \emph{SPA vertex} for short. Similarly, if a PA vertex has the smallest desired degree among all the PA vertices, we will then call this vertex a \emph{weak positive availability vertex}, or \emph{WPA vertex} for short.

For a given component, we will use the term \emph{total availability} to refer to the sum of availabilities of all of its vertices and we shall denote the total availability of some component $C$ by $t(C)$. We will consider a \emph{uniform component} to be a component such that all of its PA vertices have the same desired degree. Moreover, we will use $\deg(C)$ to signify the desired degree of any PA vertex from the uniform component $C$. Furthermore, a uniform component that has the total availability equal to one must necessarily have a single PA vertex, and we will call such a component a \emph{cleaf}. If a component is not uniform, but contains only SPA and WPA vertices, we will then refer to it as a \emph{minimum--maximum mixed component}, or \emph{MMM component} for short. Finally, a component that is neither uniform nor an MMM component shall be called a \emph{forbidden component}.

In the rest of the paper, we will take $D = (d_1, d_2, \ldots, d_n) \in \mathbb{N}^n$ to be an arbitrarily chosen fixed non-increasing sequence of $n$ integers such that $\mathcal{T}_D \neq \varnothing$. Bearing in the mind all the newly introduced terms, it is possible to reformulate Algorithms 1 and 2 in a more concise manner, as demonstrated below.

\paragraph{Algorithm 1}
\begin{enumerate}[label=\textbf{(\roman*)}]
    \item Add a new vertex and assign its desired degree and availability values to $d_1$.
    \item For $j = \overline{2, n}$, repeat the following steps until an output tree is reached.
    \begin{enumerate}[label=\textbf{(\arabic*)}]
        \item Add a new vertex $u$ and assign its desired degree and availability values both to $d_j$.
        \item For an arbitrarily chosen SPA vertex $v \neq u$, add an edge whose endpoints are the vertices $u$ and $v$ and decrease the availabilities of these two vertices by one.
    \end{enumerate}
\end{enumerate}

\paragraph{Algorithm 2}
\begin{enumerate}[label=\textbf{(\roman*)}]
    \item For each $j = \overline{1, n}$, add some new vertex and assign its desired degree and availability values to $d_j$.
    \item Repeat the following steps until exactly $n-1$ edges have been added so that an output tree is reached.
    \begin{enumerate}[label=\textbf{(\arabic*)}]
        \item\label{set_x1} Let the set $X$ comprise all the pairs $(u, v)$ of PA vertices from distinct components such that $u$ is a WPA vertex and the total availabilities of the respective components where $u$ and $v$ belong are not both equal to one, unless these are the only two components.
        \item Choose an element of $X$, i.e.\ some $(u_0, v_0) \in X$, so that $v_0$ has the greatest possible desired degree among all the $v$ vertices in the $(u, v)$ pairs of $X$.
        \item Add an edge whose endpoints are the vertices $u_0$ and $v_0$ and decrease the availabilities of these two vertices by one.
    \end{enumerate}
\end{enumerate}

\section{Validity of Algorithm 1}\label{sc_algo_1}

In this section, we will consider an arbitrary strict positive polarity function $f$ and prove that each tree maximizing $R_f$ on $\mathcal{T}_D$ must be constructible by Algorithm~1. Afterwards, we will swiftly demonstrate the converse as well --- that each tree constructible by Algorithm 1 surely attains the maximimum $R_f$ value on $\mathcal{T}_D$.

To begin, we point out that each tree surely has at least one scheme (see, for example, \cite[Corollary 1.5.2]{diestel}). However, it becomes convenient to notice that the trees that attain the maximum $R_f$ value on $\mathcal{T}_D$ always possess very specific schemes. Our immediate goal shall be to elaborate on this fact and provide a result that will later be used while proving the extremal property of Algorithm 1. We start with the following auxiliary lemma regarding the degrees of vertices that lie on an arbitrary path.

\begin{lemma}\label{path_lemma}
    Let $T \in \mathcal{T}_D$ be a tree that attains the maximum $R_f$ value on $\mathcal{T}_D$ and let $u$ and $v$ be two of its arbitrarily chosen vertices. For any vertex $w$ that lies on the path from $u$ to $v$, we necessarily have
    \[
        \deg_T(w) \ge \min(\deg_T(u), \deg_T(v)) .
    \]
\end{lemma}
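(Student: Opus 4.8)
The plan is to argue by contradiction: suppose some vertex $w$ on the $u$--$v$ path violates the inequality, i.e.\ $\deg_T(w) < \min(\deg_T(u), \deg_T(v))$, and produce a local rearrangement of $T$ that strictly increases $R_\mathcal{F}$ while preserving the degree sequence $D$, contradicting the maximality of $T$. First I would reduce to the case where $w$ is a strict interior vertex of the path (if $w$ coincides with $u$ or $v$ the inequality is trivial), so $w$ has two distinguished neighbours $a$ and $b$ along the path, one on the $u$-side and one on the $v$-side. The key asymmetry I want to exploit is that $\deg_T(u) > \deg_T(w)$ and $\deg_T(v) > \deg_T(w)$ simultaneously.

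The main tool will be an edge-swapping operation of the type underlying condition Eq.\ (\ref{damnjanovic_criterion}). Concretely, I would try to pick an edge incident to $w$ that can be ``redirected'' toward the higher-degree endpoint so that the degree sequence is left unchanged but the contribution to $R_\mathcal{F}$ strictly increases. The cleanest realisation is a \emph{degree-preserving transposition}: find two edges, one incident to a high-degree vertex and one incident to the low-degree vertex $w$, whose endpoints can be exchanged to leave all degrees fixed. Since $T$ is a tree, removing an edge splits it into two components, and I must choose the swap so that the result is still connected and acyclic; the path structure from $u$ to $v$ through $w$ is exactly what guarantees such a valid reconnection exists.

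Quantitatively, after isolating the two edges whose endpoints are swapped, the net change in $R_\mathcal{F}$ reduces to a single instance of the left-hand minus right-hand side of Eq.\ (\ref{damnjanovic_criterion}). I would set up the four degree values so that they take the form $\mathcal{F}(x,a) + \mathcal{F}(y,b) - \mathcal{F}(y,a) - \mathcal{F}(x,b)$ with $x > y$ and $a > b$, where $x$ is a degree exceeding $\deg_T(w)$ and $y = \deg_T(w)$; the strict inequality Eq.\ (\ref{damnjanovic_criterion}) then forces this quantity to be strictly positive, yielding the contradiction. The bookkeeping must be done carefully because each swapped edge also interacts with the degrees of the neighbours not on the path, but the symmetry of $\mathcal{F}$ and the fact that only the endpoints $w$ and the high-degree vertex change their incident-edge pairing should let all the unaffected terms cancel.

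The hard part, I expect, will be guaranteeing that the rearrangement is genuinely \emph{valid}: that the swap preserves the tree structure (connectivity and acyclicity) and does not inadvertently alter the degree of any vertex other than the intended ones, so that the modified graph still lies in $\mathcal{T}_D$. This is where the precise position of $w$ on the path, and the existence of two flanking neighbours $a$ and $b$ with the right degree relationships, becomes essential. Choosing \emph{which} edges to swap so that exactly one application of Eq.\ (\ref{damnjanovic_criterion}) is triggered—rather than an expression whose sign is ambiguous—is the crux of the argument, and I would devote the bulk of the proof to exhibiting this swap explicitly and verifying that it does not create a cycle.
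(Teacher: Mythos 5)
Your high-level strategy (contradiction, a degree-preserving two-edge swap, and one strict application of Eq.\ (\ref{damnjanovic_criterion})) is the same as the paper's, but your plan explicitly defers the crux --- \emph{which} two edges to swap --- and the swap you gesture at cannot be made to work. For Eq.\ (\ref{damnjanovic_criterion}) to force a strict increase, both removed edges must join a ``high'' vertex (degree $\ge \min(\deg_T(u), \deg_T(v))$) to a ``low'' vertex (degree $< \min(\deg_T(u), \deg_T(v))$), and the two new edges must pair high with high and low with low. The problem is that if both removed edges lie on the $(u,v)$-path $P$ --- for instance one incident to $w$ and one incident to $u$ or $v$, as you propose --- then deleting them splits $T$ into components $A \ni u$, a middle component $B$, and $C \ni v$, where the two high endpoints end up one in $A$ and one in $C$ (or both bordering $B$ in the crossed way). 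The aligned high--high/low--low reconnection then places one new edge inside a single component, creating a cycle and leaving the graph disconnected; the only degree-preserving reconnection that yields a tree is the crossed pairing, whose contribution to $R_\mathcal{F}$ is, by Eq.\ (\ref{damnjanovic_criterion}), either of ambiguous sign or strictly \emph{negative}. So no swap confined to the path $P$ proves the lemma, and your reliance on the flanking neighbours $a$, $b$ of $w$ inherits exactly this defect.

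The missing idea is an auxiliary path leaving the segment. Since some vertex of $P$ has degree below $\min(\deg_T(u), \deg_T(v))$, one gets $\deg_T(v) \ge 2$, so there is a non-trivial path $Q$ from $v$ to a leaf $t$ meeting $P$ only at $v$. Let $p_0 p_1$ be the \emph{first} edge of $P$ (walking from $u$) that crosses the degree threshold, and let $q_0 q_1$ be the first such edge of $Q$ (walking from $v$; it exists because $\deg_T(t) = 1$). Removing $p_0 p_1$ and $q_0 q_1$ gives components $A \ni p_0$, $B \ni p_1, \ldots, v, \ldots, q_0$, $C \ni q_1$; now the aligned pairing $p_0 q_0$ (high--high, joining $A$ to $B$) and $p_1 q_1$ (low--low, joining $B$ to $C$) \emph{is} a valid tree reconnection, precisely because the two descent edges sit on different branches and hence have opposite orientations relative to the middle component. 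Eq.\ (\ref{damnjanovic_criterion}) with $x = \deg_T(p_0) > y = \deg_T(q_1)$ and $a = \deg_T(q_0) > b = \deg_T(p_1)$ then gives $R_\mathcal{F}(T_1) - R_\mathcal{F}(T) > 0$, the desired contradiction. Without this second path and the ``first crossing edge'' selection, your outline does not close.
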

\begin{proof}
    We shall prove the lemma by contradiction. Let $P$ be the $(u, v)$-path in $T$ and suppose that there does lie a vertex on $P$ whose degree is below $\min(\deg_T(u), \linebreak \deg_T(v))$. It is straightforward to see that $\deg_T(u), \deg_T(v) \ge 2$ must hold. For this reason, we can construct a non-trivial path $Q$ from $v$ to some leaf $t$ so that this path is entirely disjoint with $P$, except for the vertex $v$.
    
    Now, let $p_1$ be the first vertex on $P$ whose degree is lower than $\min(\deg_T(u), \linebreak \deg_T(v))$, and let $p_0$ be the vertex on this path before it. Similarly, let $q_1$ be the first vertex on $Q$ whose degree is below $\min(\deg_T(u), \deg_T(v))$, and let $q_0$ be the vertex on this path before it. Taking everything into consideration, we obtain a $(p_0, q_1)$-path as depicted in Figure \ref{path_lemma_fig} that will be of further interest.

\begin{figure}[ht]
    \centering
    \begin{tikzpicture}
        \node[state, minimum size=0.75cm, thick] (1) at (-1.0, 0) {$u$};
        \node[state, minimum size=0.75cm, thick] (2) at (1.5, 0) {$p_0$};
        \node[state, minimum size=0.75cm, thick] (3) at (3.0, 0) {$p_1$};
        \node[state, minimum size=0.75cm, thick] (4) at (5.5, 0) {$v$};
        \node[state, minimum size=0.75cm, thick] (5) at (8.0, 0) {$q_0$};
        \node[state, minimum size=0.75cm, thick] (6) at (9.5, 0) {$q_1$};
        \node[state, minimum size=0.75cm, thick] (7) at (12.0, 0) {$t$};

        \path[draw=black, thick, snake it] (1) -- (2);
        \path[thick] (2) edge (3);
        \path[draw=black, thick, snake it] (3) -- (4);
        \path[draw=black, thick, snake it] (4) -- (5);
        \path[thick] (5) edge (6);
        \path[draw=black, thick, snake it] (6) -- (7);
    \end{tikzpicture}
    \caption{The obtained $(p_0, q_1)$-path in $T$, alongside the vertices $u$ and $t$.}
    \label{path_lemma_fig}
\end{figure}
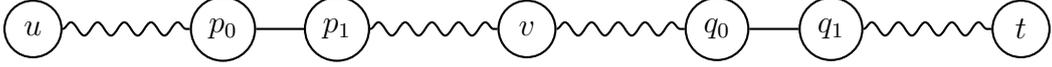

    It is clear that the tree $T$ satisfies
    \begin{align*}
        p_0 \sim p_1, && q_0 \sim q_1, && p_0 \not\sim q_0, && p_1 \not\sim q_1 .
    \end{align*}
    Bearing this in mind, we can remove the edges $p_0 p_1$ and $q_0 q_1$ from $T$ and add the edges $p_0 q_0$ and $p_1 q_1$ in order to obtain another tree $T_1$ whose vertices have the same degrees as in $T$. Hence, $T_1 \in \mathcal{T}_D$. Furthermore, $R_f(T)$ and $R_f(T_1)$ will have the same summands in Eq.\ (\ref{rf_def}) except for those that correspond to the deleted and newly added edges. This immediately implies
    \begin{align}\label{aux_3}
        \begin{split}
        R_f(T_1) - R_f(T) &= f(\deg_T(p_0), \deg_T(q_0)) + f(\deg_T(p_1), \deg_T(q_1))\\
        &\quad - f(\deg_T(p_0), \deg_T(p_1)) - f(\deg_T(q_0), \deg_T(q_1)) .
        \end{split}
    \end{align}
    However, we know that
    \begin{align*}
        \deg_T(p_0), \deg_T(q_0) &\ge \min(\deg_T(u), \deg_T(v)),\\
        \deg_T(p_1), \deg_T(q_1) &< \min(\deg_T(u), \deg_T(v)),
    \end{align*}
    which swifty leads us to
    \begin{align*}
        f(\deg_T(p_0), \deg_T(q_0)) &+ f(\deg_T(q_1), \deg_T(p_1)) >\\
        &>  f(\deg_T(q_1), \deg_T(q_0)) + f(\deg_T(p_0), \deg_T(p_1))
    \end{align*}
    by virtue of Eq.\ (\ref{cond_c1}). Now, Eq.\ (\ref{aux_3}) tells us that $R_f(T_1) - R_f(T) > 0$ must hold, which is impossible since the tree $T$ attains the maximum $R_f$ value on $\mathcal{T}_D$. Hence, we obtain a contradiction.
\end{proof}

Now, by taking into consideration Lemma \ref{path_lemma}, we are able to formulate and prove the next lemma regarding the constructibility of trees that attain the maximum $R_f$ value.

\begin{lemma}\label{constructibility_lemma}
    If $T \in \mathcal{T}_D$ is some tree that attains the maximum $R_f$ value on $\mathcal{T}_D$, then this tree surely has a scheme $((v_0, v_1, v_2, \ldots, v_{n-1}), (f_1, f_2, \ldots, f_{n-1}))$ such that
    \begin{itemize}
        \item for each $j = \overline{0, n-1}$, we have $\deg_T(v_j) = d_{j+1}$;
        \item for all the $1 \le j < h \le n-1$ such that $\deg_T(v_j) = \deg_T(v_h)$, the condition $\deg_T(f_j) \ge \deg_T(f_h)$ must hold.
    \end{itemize}
\end{lemma}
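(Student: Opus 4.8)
The plan is to fix a convenient root and then build the construction scheme greedily, one vertex at a time, always attaching the next vertex as high up in the degree order as possible; the crucial structural input will be Lemma \ref{path_lemma}, used to control how degrees behave along root-to-vertex paths.

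First I would root $T$ at a vertex $v_0$ of maximum degree $d_1$ and orient every edge toward the root, so that each non-root vertex $w$ acquires a unique parent, namely its neighbor on the $(v_0, w)$-path. Applying Lemma \ref{path_lemma} to the pair $(v_0, w)$, every vertex on this path --- in particular the parent of $w$ --- has degree at least $\min(\deg_T(v_0), \deg_T(w)) = \deg_T(w)$, since $v_0$ attains the largest degree in $T$. Hence in the rooted tree the degree of a parent is always at least that of its child. I would then produce the ordering $v_0, v_1, \ldots, v_{n-1}$ by a greedy frontier process: maintaining the set $S$ of already-listed vertices (initially $\{v_0\}$), which always induces a subtree containing the root, I repeatedly select the next vertex $v_j$ from the \emph{frontier} --- the vertices outside $S$ adjacent to $S$, each of which has a single neighbor $f_j \in S$ (its parent, by connectedness and acyclicity) --- choosing one of maximum degree, and among those one whose parent has maximum degree, breaking remaining ties arbitrarily. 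Since one vertex joins $S$ at each step and the frontier is non-empty until $S$ exhausts $V(T)$, this yields a legitimate construction scheme.

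It then remains to verify the two monotonicity properties. For the first, I would argue that the maximum degree present in the frontier can never increase from one step to the next: any vertex newly exposed to the frontier after $v_j$ is added is a child of $v_j$, hence of degree at most $\deg_T(v_j)$, while every other frontier vertex was already available and so could not have exceeded the degree of the vertex just chosen; this gives $\deg_T(v_0) \ge \deg_T(v_1) \ge \cdots \ge \deg_T(v_{n-1})$. For the second, given $j < h$ with $\deg_T(v_j) = \deg_T(v_h) = d$, I must show $\deg_T(f_j) \ge \deg_T(f_h)$. If $f_h$ already lay in $S$ when $v_j$ was chosen, then $v_h$ was itself an eligible frontier vertex of degree $d$ at that moment, so the greedy preference for maximum parent-degree forces $\deg_T(f_j) \ge \deg_T(f_h)$. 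Otherwise $f_h$ was added at some step no earlier than $j$, whence the first property gives $\deg_T(f_h) \le \deg_T(v_j) = d$, while the parent-degree inequality from the rooting gives $\deg_T(f_h) \ge \deg_T(v_h) = d$, so $\deg_T(f_h) = d \le \deg_T(f_j)$.

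I expect the delicate point to be exactly this second property, and specifically the case where the parent $f_h$ of $v_h$ is listed only after $v_j$. A priori one might fear that $v_h$ could possess a much higher-degree parent than $v_j$ yet be forced to appear later merely because that parent was not yet available; the resolution is that the first property pins any such late-added parent to degree exactly $d$, so it cannot out-rank $f_j$. This tight coupling --- using the already-established condition on the $\deg_T(v_j)$ to prove the condition on the $\deg_T(f_j)$ --- is the crux of the argument, and it is ultimately underwritten by Lemma \ref{path_lemma} through the parent-degree inequality guaranteed by rooting at a maximum-degree vertex.
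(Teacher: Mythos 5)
Your proof is correct, and it rests on the same key input as the paper's --- Lemma \ref{path_lemma} --- but it organizes the argument along a genuinely different line. The paper deduces from Lemma \ref{path_lemma} that, for every $j$, the subgraph of $T$ induced by the vertices of degree at least $j$ is a subtree; it then builds a degree-monotone construction scheme layer by layer (degree $\max D$ first, then degree $\max D - 1$, and so on), and finally argues that within each degree class $\beta$ the vertices attached to parents of degree greater than $\beta$ can be freely permuted, so each class may be sorted by parent degree. You instead root $T$ at a maximum-degree vertex, extract from Lemma \ref{path_lemma} the single invariant that every parent has degree at least that of its child, and run one greedy frontier pass (maximum degree, tie-broken by maximum parent degree), verifying both required properties as invariants of that process. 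The two constructions yield essentially the same schemes, but your treatment of the second bullet --- in particular the case analysis on whether $f_h$ is already listed at step $j$, with the late-added-parent case resolved by pinning $\deg_T(f_h)$ to exactly $d$ --- is more explicit than the paper's corresponding claim that such vertices ``can certainly freely be reordered,'' which is left informal. What the paper's layered formulation buys is a transparent picture of the nested subtree structure of an extremal tree; what yours buys is a self-contained invariant argument with no reordering step left to justify.
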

\begin{proof}
    Lemma~\ref{path_lemma} tells us that, for each $j = \overline{\min D, \max D}$, the subgraph of $T$ induced by the set of vertices whose degree is at least $j$ must be a tree. From here, we quickly conclude that we can construct $T$ by simply constructing its subtree induced by the vertices of degree $\max D$, then extending this subtree to the subtree induced by the vertices of degree at least $\max D - 1$, and so on, until we obtain $T$ itself. Thus, the tree $T$ necessarily possesses a scheme $C' = ((v'_0, v'_1, v'_2, \ldots, v'_{n-1}), (f'_1, f'_2, \ldots, f'_{n-1}))$ such that the degrees of the vertices $v'_0, v'_1, v'_2, \ldots, v'_{n-1}$ appear in non-increasing order. This promptly implies $\deg_T(v'_j) = d_{j+1}$ for each $j = \overline{0, n-1}$.

    We have obtained a scheme $C'$ that satisfies the first condition given in the lemma. In order to finalize the proof, we will explain how this scheme can be modified so that the second condition surely holds as well. First of all, it is easy to check that the second condition necessarily holds for the vertices of degree $\max D$, hence it becomes sufficient to show that, for any $\beta,\, \min D \le \beta < \max D$, the addition of vertices of degree $\beta$ within the scheme $C'$ can be permuted in some manner so that the second condition becomes satisfied.

    The key observation to make is that while $T$ is constructed via the algorithm dictated by $C'$, each vertex of degree $\beta$ is surely connected to a vertex of degree at least $\beta$ upon being added. Moreover, each vertex of degree $\beta$ that is connected to a vertex of degree greater than $\beta$ can certainly freely be reordered among all the vertices of degree $\beta$. In other words, this vertex can be added before or after any other vertex of degree $\beta$, given the fact that its initial neighbor is definitely present to begin with. This directly means that we can reorder the addition of all the vertices of degree $\beta$ so that we first add those whose initial neighbor has the greatest possible degree, then those whose initial neighbor has the second greatest degree, and so on, until we add the vertices of degree $\beta$ whose initial neighbor also has the degree $\beta$, and which cannot freely be reordered. By applying the said transformation on $C'$ for each possible $\beta,\, \min D \le \beta < \max D$, we obtain a scheme $C$ that truly satisfies both criteria given in the lemma, which completes the proof.
\end{proof}

By implementing Lemma \ref{constructibility_lemma}, we can immediately prove one half of the desired extremal property of Algorithm 1. This result is disclosed within the following lemma.

\begin{lemma}\label{main_lemma}
    Any tree that attains the maximum $R_f$ value on $\mathcal{T}_D$ must be constructible by Algorithm 1.
\end{lemma}
\begin{proof}
    Let $T$ be any such tree. It is clear that this tree must have a scheme $C$ that satisfies the criteria stated in Lemma \ref{constructibility_lemma}. Now, while $T$ is being constructed via the algorithm dictated by $C$, suppose that there exists a vertex $v$ such that, upon being added, it is not adjacent to a pre-existing SPA vertex. Let $p$ be such a pre-existing vertex and let $q$ be the vertex that $v$ gets connected to instead. Due to the criteria imposed on $C$ by virtue of Lemma \ref{constructibility_lemma}, we see that none of the vertices of degree $\deg_T(v)$ that are added after $v$ can be adjacent to $p$ either, which means that the vertex $p$ necessarily has a neighbor $u$ in $T$ such that $\deg_T(u) < \deg_T(v)$. Taking everything into consideration, we obtain that the tree $T$ bears a structure as demonstrated in Figure \ref{main_lemma_fig}.

\begin{figure}[ht]
    \centering
    \begin{tikzpicture}
        \node[state, minimum size=0.75cm, thick] (1) at (0.0, 0) {$u$};
        \node[state, minimum size=0.75cm, thick] (2) at (1.5, 0) {$p$};
        \node[state, minimum size=0.75cm, thick] (3) at (4.0, 0) {$q$};
        \node[state, minimum size=0.75cm, thick] (4) at (5.5, 0) {$v$};

        \path[thick] (1) edge (2);
        \path[draw=black, thick, snake it] (2) -- (3);
        \path[thick] (3) edge (4);
    \end{tikzpicture}
    \caption{The structure of the tree $T$.}
    \label{main_lemma_fig}
\end{figure}
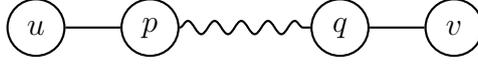

    It is obvious that the tree $T$ satisfies
    \begin{align*}
        u \sim p, && v \sim q, && u \not\sim q, && v \not\sim p .
    \end{align*}
    If we remove the edges $up$ and $vq$ from $T$ and add the edges $uq$ and $vp$, we get another tree $T_1$ whose vertices have the same degrees as in $T$. For this reason, we have $T_1 \in \mathcal{T}_D$. Using the same logic as in the proof of Lemma \ref{path_lemma}, it is easy to show that
    \begin{align}\label{aux_2}
        \begin{split}
        R_f(T_1) - R_f(T) &= f(\deg_T(u), \deg_T(q)) + f(\deg_T(v), \deg_T(p))\\
        &\quad - f(\deg_T(u), \deg_T(p)) - f(\deg_T(v), \deg_T(q)) .
        \end{split}
    \end{align}
    Taking into consideration that
    \[
        \deg_T(p) > \deg_T(q) \ge \deg_T(v) > \deg_T(u) ,
    \]
    it becomes straightforward to obtain
    \begin{align*}
        f(\deg_T(p), \deg_T(v)) &+ f(\deg_T(q), \deg_T(u)) >\\
        &>  f(\deg_T(q), \deg_T(v)) + f(\deg_T(p), \deg_T(u))
    \end{align*}
    by directly implementing Eq.\ (\ref{cond_c1}). Now, by using Eq.\ (\ref{aux_2}), this immediately leads us to $R_f(T_1) - R_f(T) > 0$, which is clearly not possible due to the fact that $T$ attains the maximum $R_f$ value on $\mathcal{T}_D$.
    
    Thus, we conclude that while $T$ is being constructed in accordance with the scheme $C$, the vertices must be added in such a way their degrees yield a non-increasing sequence, with each vertex after the first being connected to a pre-existing SPA vertex. However, this is precisely how Algorithm 1 works, hence it promptly follows that $T$ must indeed be constructible by Algorithm 1.
\end{proof}

We are now finally in position to put all the pieces of the puzzle together and complete the proof of the validity of Algorithm 1.

\bigskip\noindent
\emph{Proof of the validity of Algorithm 1}.\quad
If a tree attains the maximum $R_f$ value on $\mathcal{T}_D$, then it is surely constructible by Algorithm 1, by virtue of Lemma \ref{main_lemma}. Thus, in order to finish the validity proof, we need to show that each tree constructible by Algorithm 1 must also attain the maximum $R_f$ value on $\mathcal{T}_D$. Since there are finitely many isomorphism classes among the $\mathcal{T}_D$ trees, there certainly exists a tree $T_0 \in \mathcal{T}_D$ that attains the maximum $R_f$ value on $\mathcal{T}_D$. Due to Lemma \ref{main_lemma}, we know that $T_0$ is constructible by Algorithm 1. From here we notice that in order to demonstrate that all the trees constructible by Algorithm 1 attain the maximum $R_f$ value on $\mathcal{T}_D$, it is sufficient to prove that they all have the same $R_f$ value.

For each $1 \le j \le n$ and $0 \le k \le n-1$, let $Y_{j, k}$ denote the sum of availabilities of all the existing vertices of degree $k$ after $j$ vertices have been added in total while executing Algorithm 1. Let the scheme $((v_0, v_1, v_2, \ldots, v_{n-1}), (f_1, f_2, \ldots, f_{n-1}))$ correspond to an execution of Algorithm 1 which yields the tree $T \in \mathcal{T}_D$. It becomes apparent that while adding vertex $v_j$, the degrees of $v_j$ and $f_j$ can be determined by using the simple expression
\begin{align*}
    \deg_T v_j &= d_j,\\
    \deg_T f_j &= \max \{ k \in \mathbb{N} \colon 0 \le k \le n-1,\, Y_{j, k} > 0 \} . 
\end{align*}
Besides that, it is possible to obtain the values $Y_{j + 1, k}$ in terms of $Y_{j, k}$ by simply setting $Y_{j + 1, k} \coloneqq Y_{j, k}$ for each $0 \le k \le n-1$, then increasing the value of $Y_{j + 1, \deg_T v_j}$ by $\deg_T v_j - 1$ and then decreasing the value of $Y_{j + 1, \deg_T f_j}$ by one. Here, it is important to notice that regardless of how the algorithm is executed, the elements $Y_{j, k}$ depend solely on the given degree sequence $D$, and not the concrete execution itself. For this reason, the degrees of $v_0, v_1, v_2, \ldots, v_{n-1}$ and $f_1, f_2, \ldots, f_{n-1}$ must be the same in all the executions of Algorithm 1. Given the fact that for any $T \in \mathcal{T}_D$ constructed via the scheme $((v_0, v_1, v_2, \ldots, v_{n-1}), (f_1, f_2, \ldots, f_{n-1}))$, we have
\[
    R_f(T) = \sum_{j = 1}^{n-1} f(\deg_T(v_j), \deg_T(f_j)),
\]
it is clear that all the trees constructible by Algorithm 1 must attain the same $R_f$ value, as desired. \hfill\qed

\section{Validity of Algorithm 2}\label{sc_algo_2}

In this section, we will consider an arbitrary strict positive polarity function $f$ and prove that each tree minimizing $R_f$ on $\mathcal{T}_D$ must be constructible by Algorithm~2. We will then show that each tree constructible by Algorithm 2 also attains the minimum value of $R_f$ on $\mathcal{T}_D$, thereby completing the proof. We begin by disclosing the following two auxiliary lemmas.

\begin{lemma}\label{path_swap_lemma}
    Let $T \in \mathcal{T}_D$ be a tree that attains the minimum $R_f$ value on $\mathcal{T}_D$. If the tree $T$ contains a path $x_0 x_1 \cdots x_{n-1} x_n$ of length $n \in \mathbb{N}, \, n \ge 3$ which satisfies $\deg_T(x_0) < \deg_T(x_n)$, then $\deg_T(x_1) \ge \deg_T(x_{n-1})$ must be true.
\end{lemma}
\begin{proof}
\begin{figure}[ht]
    \centering
    \begin{tikzpicture}
        \node[state, minimum size=1.10cm, thick] (1) at (0.0, 0) {$x_0$};
        \node[state, minimum size=1.10cm, thick] (2) at (1.8, 0) {$x_1$};
        \node[state, minimum size=1.10cm, thick] (3) at (4.8, 0) {$x_{n-1}$};
        \node[state, minimum size=1.10cm, thick] (4) at (6.6, 0) {$x_n$};

        \path[thick] (1) edge (2);
        \path[draw=black, thick, snake it] (2) -- (3);
        \path[thick] (3) edge (4);
    \end{tikzpicture}
    \caption{The structure of the path $P = x_0 x_1 \cdots x_{n-1} x_n$.}
    \label{good_lemma_fig}
\end{figure}
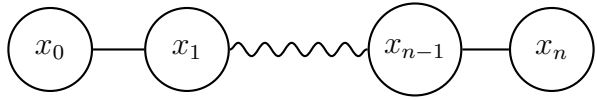
    Assume the contrary and let $P = x_0 x_1 \cdots x_{n-1} x_n$ be a path in $T$ of length $n \in \mathbb{N}, \, n \ge 3$  such that $\deg_T(x_0) < \deg_T(x_n)$ and $\deg_T(x_1) < \deg_T(x_{n-1})$. Now consider the graph $T_1$ obtained by removing the edges $x_0 x_1, x_{n-1} x_n$ and adding the edges $x_0 x_{n-1}$ and $x_1 x_n$. Bearing in mind Figure \ref{good_lemma_fig}, it is evident that $T_1$ must be a tree. Moreoever, it is straightforward to see that $T_1 \in \mathcal{T}_D$. By implementing Eq.\ (\ref{rf_def}), we immediately obtain that
    \begin{align*}
        R_f(T) - R_f(T_1) &= f(\deg_T(x_n) ,\deg_T(x_{n-1})) + f(\deg_T(x_0), \deg_T(x_1))\\
        & \qquad - f(\deg_T(x_n), \deg_T(x_1)) - f(\deg_T(x_0), \deg_T(x_{n-1})) .
    \end{align*}
    Now, it is sufficient to use Eq.\ (\ref{cond_c1}) in order to reach $R_f(T) - R_f(T_1) > 0$. Thus, $T$ does not attain the minimum value of $R_f$ on $\mathcal{T}_D$, which is a contradiction.
\end{proof}

\begin{lemma}\label{good_lemma}
Let $T\in \mathcal{T}_D$ be a tree that attains the minimum $R_f$ value on $\mathcal{T}_D$. Now, let $n, b, a$ be positive integers and suppose that $u, v_0,v_1$ are vertices in $T$ such that $\deg_T(u)=a, \, \deg_T(v_0)=b, \deg_T(v_1)=a$. Furthermore, let $c=\min(a,b)$ and $d=\max(a,b)$. For an arbitrary edge $z_1z_2\in E(T)$, say that it is good if $\{\deg_T(z_1),\deg_T(z_2)\}=\{c,d\}$. If there is an $i \in \{0,1\}$ such that there is a path $ux_1 \cdots x_nv_i \cdots v_{1-i}y$ in $T$ with $\deg_T(x_n),\deg_T(y)\in [c,d]$, then one of the edges $x_nv_i, v_{1-i}y$ must be good.
\end{lemma}
\begin{proof}
    The proof is trivial to do if $a = b$. We now choose to carry out the proof only for the case when $a < b$, given the fact that the statement can be proved in an entirely analogous manner whenever $b < a$. Thus, we will assume that $c=a$ and $b=d$.

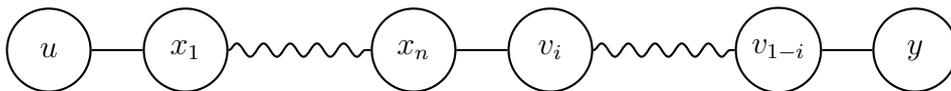
\begin{figure}[ht]
    \centering
    \begin{tikzpicture}
        \node[state, minimum size=1.10cm, thick] (1) at (0.0, 0) {$u$};
        \node[state, minimum size=1.10cm, thick] (2) at (1.8, 0) {$x_1$};
        \node[state, minimum size=1.10cm, thick] (3) at (4.8, 0) {$x_n$};
        \node[state, minimum size=1.10cm, thick] (4) at (6.6, 0) {$v_i$};
        \node[state, minimum size=1.10cm, thick] (5) at (9.6, 0) {$v_{1-i}$};
        \node[state, minimum size=1.10cm, thick] (6) at (11.4, 0) {$y$};

        \path[thick] (1) edge (2);
        \path[draw=black, thick, snake it] (2) -- (3);
        \path[thick] (3) edge (4);
        \path[draw=black, thick, snake it] (4) -- (5);
        \path[thick] (5) edge (6);
    \end{tikzpicture}
    \caption{The structure of the path $P = ux_1 \cdots x_nv_i \cdots v_{1-i}y$.}
    \label{good_lemma_fig_2}
\end{figure}

Suppose the contrary, that neither $x_nv_i$ nor $v_{1-i}y$ are good edges. Define the paths $P,P'$ to be $P=ux_1...x_nv_i...v_{1-i}y$ and $P'=uv_0...v_1x_1...x_ny$ where we have $V(P)=V(P')$. Let $T_1$ be the graph obtained by taking $T$ and replacing the edges $ux_1, x_nv_i, v_{1-i}y$ with the edges $uv_0, v_1x_1, x_ny$. Notice that $T_1$ is obtained from $T$ when we replace $P$ with $P'$. This means that $T_1$ is indeed a tree, and given the fact that $P,P'$ have the same endpoints we have not changed any of the vertex degrees. Hence we obtain that $T_1\in \mathcal{T}_D$. If $i=0$, we then have $a < \deg_T(x_n)$ and $\deg_T(y) < b$, which leads us to
    \begin{align*}
    R_f(T) &- R_f(T_1) = f(a,\deg_T(x_1)) + f(\deg_T(x_n),b) + f(a,\deg_T(y))\\
    &- f(a,b) -f(a,\deg_T(x_1))-f(\deg_T(x_n),\deg_T(y))>0   
    \end{align*}
    by implementing Eq.\ (\ref{cond_c1}) together with the aforementioned inequalities. This is a contradiction since $T$ obtains the minimum value of $R_f$ on $\mathcal{T}_D$. The case when $i = 1$ can be resolved in an analogous manner and we choose to leave out the according proof details.
\end{proof}

In the remainder of the section, we will use $a, b$ to denote the desired degrees of the WPA and SPA vertices, respectively. Our next step shall be to use Lemmas~\ref{path_swap_lemma} and \ref{good_lemma} in order to demonstrate that every $T\in \mathcal{T}_D$ which minimizes $R_f$ is constructible using Algorithm 2. The said result is given in the next lemma.

\begin{lemma}\label{algo_2_construction_lemma}
 Suppose that $T \in \mathcal{T}_D$ minimizes $R_f$ on $\mathcal{T}_D$. Then it is possible to construct $T$ using Algorithm 2 in such a way that at any time after all the vertices have been added and before the tree is fully constructed, the following conditions hold:
\begin{enumerate}[label=\textbf{(\roman*)}]
    \item\label{zh1} There is at most one MMM component and all the other components are uniform.
    \item\label{zh2} If there is an MMM component, then all the cleaves $C$ have $\deg(C)$ equal to $a$ or $b$.
    \item\label{zh3} If there are no MMM components, then each cleaf $C$ such that $\deg(C) > a$ certainly has a degree which is not below the degree of any non-cleaf.
\end{enumerate}
\end{lemma}
\begin{proof}
Suppose that $k\ge 0$ is the maximum number of edges that we can add using Algorithm 2 such that at every required step the conditions \ref{zh1}, \ref{zh2} and \ref{zh3} hold. Note that at the start we have that all components are uniform and all cleaves have the degree one, so \ref{zh1}, \ref{zh2} and \ref{zh3} do hold.  Let $n=|T|$. If $k=n-1$, then we are done. Suppose that $k=n-2$. Then we would have one edge left and we could add it according to Algorithm 2 and there would be a single component remaining so we would be done. Now suppose that $k<n-2$. 

Let $T'\le T$ be the spanning subgraph of $T$ that we can construct using Algorithm 2 with $k=|E(T')|$. Suppose that $T'$ has $l$ components $C_1, C_2, \ldots, C_l$. Consider the graph $T_1$ with vertices $C_i$ and where there is an edge $C_iC_j$ if and only if there is an edge in $T$ between $C_i$ and $C_j$. Note that $T_1$ is connected and acyclic and hence a tree. Thus, we must have $\deg_{T_1}(C_i) = 1$ for some $C_i$. Since there are no $e,f$ such that there are two edges in $T$ between $C_e$ and $C_f$ (as that would give a cycle in $T$), we have that $C_i$ is a cleaf. In particular, there must be a cleaf.

We now choose to split the given problem into two cases.

\bigskip\noindent
\emph{Case 1}.\quad There is no MMM component in $T'$. We split this case into two further subcases.

\medskip\noindent
\emph{Case 1a}.\quad
Every component of degree $a$ or $b$ is a cleaf. Note that by adding any edge from $T \backslash T'$ we do not obtain any component with zero total availability since we could still add edges to make $T$. Thus, there is some component that is not a cleaf. Let $C$ be a component that is not a cleaf and has the highest possible degree and let $\deg(C) = \xi$. Also, let $u$ be a PA vertex of desired degree $a$. There is a shortest path in $T$ from $u$ to $C$. Let that path be $P = u x_1 \cdots x_m v$ with $m\ge 0$ and $v\in C$. Since $C$ is not a cleaf, there must be some $v_0\in C$ and a PA vertex $y\in V(T)\backslash C$ such that $y\not \in P$ and $v_0y\in E(T)$. Therefore, $ux_1 \cdots x_mv \cdots v_0y$ is a path in $T$.

If $m=0$, then we may add the edge $uv$. This is valid in accordance with Algorithm 2 and it is not difficult to realize that all the components would now be uniform. Moreover, the newly formed component containing $u, v$ may or may not be a cleaf, but either way, each cleaf with a degree greater than $a$ would not have a degree lower than any non-cleaf. For this reason, the additional conditions \ref{zh1}, \ref{zh2} and \ref{zh3} would all hold as well. This observation would contradict the maximality of $k$, as desired.

Now, if $m>0$, then note that $\deg_T(x_1)\le \xi$ because $x_1$ must belong to a component that is not a cleaf and not $C$, since $P$ must enter and then leave that component. This means that $\deg_T(u) = a \le \deg_T(y)$, as well as $\deg_T(x_1) \le \xi = \deg_T(v_0)$. This allows us to apply Lemma \ref{path_swap_lemma} on the path $ux_1 \cdots x_mv \cdots v_0y$ and deduce that $\deg_T(y) = a$ or $\deg_T(x_1) = \xi$. In any case, there exists an edge whose endpoints are two PA vertices with the desired degrees $a$ and $\xi$ coming from different components, one of which is not a cleaf. Thus, we can add that edge according to Algorithm 2. It is not difficult to establish that all the newly obtained components will be uniform. Also, the newly formed component may or may not be a cleaf, but either way, the condition \ref{zh3} will hold, as desired. This contradicts the maximality of $k$ once again.

\medskip\noindent
\emph{Case 1b}.\quad
Not all components with degrees $a, b$ are cleaves. As noted earlier, there must exist at least one cleaf. The condition \ref{zh3} guarantees that there certainly exists a cleaf of degree $a$ or $b$. Without loss of generality, let there be a cleaf of degree $a$. We now have that either there is a component of degree $b$ that is not a cleaf, or all the components of degree $b$ are cleaves and then there must be a component of degree $a$ which is not a cleaf. Either way, there are two components $C, D$ of degrees $a, b$, respectively, such that one of them is a cleaf, while the other is not.

Without loss of generality, let $C$ be a cleaf and let $u\in C$ be the corresponding PA vertex. Following the same argument as in Case 1a, we can show that that Algorithm~2 permits us to add an edge of $T$ whose endpoints have the desired degrees $a, b$ in $T$. We now have two possibilities --- either the newly formed component is uniform or not. If it is uniform, then it must be of degree $a$ or $b$ and it is not difficult to check that all the conditions \ref{zh1}, \ref{zh2} and \ref{zh3} must hold. If it is not uniform, this means that we had a non-cleaf of degree $b$ to begin with, which promptly implies that we end up with an MMM component and that all the newly existing cleaves must have the degree $a$ or $b$. This means that the conditions \ref{zh1}, \ref{zh2} and \ref{zh3} all hold. We reach a contradiction regarding the maximality of $k$.

\bigskip\noindent
\emph{Case 2}.\quad
There is an MMM component in $T'$. Let $u$ be any vertex which is in some cleaf, and by \ref{zh2}, without loss of generality, let $\deg_T(u) = a$. Let $C$ be the MMM component and let $u x_1 \cdots x_m v_0$ be the shortest path in $T$ from $u$ to $C$, where $m \ge 0$ and $v_0$ is a PA vertex. Also, let $v_1\in C$ be a PA vertex such that $\deg_T(v_0) \neq \deg_T(v_1)$. There must be some $y\not \in C$ such that $v_1y\in E(T)$, which means that $u x_1 \cdots x_m v_0 \cdots v_1 y$ is a path in $T$.

If $m=0$, then either $\deg_T(v_0) = b$, in which case we can add the edge $u v_0$, or $\deg_T(v_1)=b$, and then by Lemma \ref{path_swap_lemma} we obtain $\deg_T(v_0) = b$ or $\deg_T(y) = a$, thus we can add either the edge $uv_0$ or $v_1y$. In each of these scenarios, Algorithm 2 permits us to add an edge in such a way that all the conditions \ref{zh1}, \ref{zh2} and \ref{zh3} are satisfies. This can be noticed by using a similar argumentation as done so in Case 1b.

If $m\ge 1$, then by Lemma \ref{good_lemma}, at least one of the two edges $x_m v_0$ or $v_1 y$ will have endpoints with desired degrees $a$ and $b$ and could be added using Algorithm~2. Whatever the case, by adding the said edge, we will connect some component to $C$ and keep at most one MMM component. If there is a new cleaf, it must have degree $a$ or $b$. Thus, the conditions \ref{zh1}, \ref{zh2} and \ref{zh3} will all certainly hold. This once again contradicts the maximality of $k$.
\end{proof}

We have just shown that all trees that minimize $R_{f}$ can be constructed using Algorithm 2. Thus, the only thing left to do is to show the converse --- that any tree constructed using Algorithm 2 actually minimizes $R_{f}$. In order to finalize the desired proof, we will rely on the following lemma which analyzes the behavior of Algorithm 2 while it is being executed.

\begin{lemma}\label{behavior_lemma}
    Whenever Algorithm 2 is applied on some degree sequence $D$, at any time after all the vertices have been added and before the tree is fully constructed, the following conditions must hold:
    \begin{enumerate}[label=\textbf{(\roman*)}]
        \item\label{kkk1} There are no forbidden components.
        \item\label{kkk2} If there is an MMM component, then all the cleaves $C$ have $\deg(C)$ equal to $a$ or $b$.
        \item\label{kkk3} If there are no MMM components, then each cleaf $C$ such that $\deg(C) > a$ certainly has a degree which is not below the degree of any non-cleaf.
    \end{enumerate}
\end{lemma}
\begin{proof}
We will prove the lemma by induction. Clearly, the lemma statement is true before any edges have been added. Suppose it is true after adding $k$ edges for some $k < n-2$. By using a similar argument as done so in the proof of Lemma \ref{algo_2_construction_lemma}, there must be a cleaf.

If there is an MMM component, then there must be a cleaf of degree $a$ or $b$, so Algorithm 2 dictates that an edge should be added whose endpoints have desired degrees $a$ and $b$. Regardless of which such edge is added, we obtain that all the cleaves must have degrees $a$ and $b$. Thus, the conditions \ref{kkk1}, \ref{kkk2} and \ref{kkk3} must all be satisfied, as desired.

If there is no MMM component, then if all the uniform components of degree $a$ are cleaves, the problem is straightforward to resolve --- Algorithm 2 dictates that we should add an edge such that its endpoints have the desired degrees $a$ and $\xi$, where $\xi$ represents the greatest degree that a non-cleaf component has. The newly formed component will be uniform and its degree shall be $\xi$, and from here, it is not difficult to notice that \ref{kkk1}, \ref{kkk2} and \ref{kkk3} must all hold.

Finally, if there is no MMM component and there exists at least one component of degree $a$ that is not a cleaf, then Algorithm 2 states that we should add an edge whose endpoints have the desired degrees $a$ and $b$. However, by doing so, we obtain that the new component is either a uniform component of degree $a$ or $b$, or an MMM component. If the component is uniform, then it is trivial to see that all the conditions \ref{kkk1}, \ref{kkk2} and \ref{kkk3} are satisfied. If the newly generated component is an MMM component, this means that it arose from merging a non-cleaf of degree $a$ and a non-cleaf of degree $b$. From here, it is evident that all the remaining cleaves must have the degree $a$ or $b$, which implies that the conditions \ref{kkk1}, \ref{kkk2} and \ref{kkk3} all hold once again.
\end{proof}

We are now in the position to implement Lemma \ref{behavior_lemma} in order to complete the proof of the validity of Algorithm 2, and thereby finalize the proof of Theorem \ref{main_theorem}.

\bigskip\noindent
\emph{Proof of the validity of Algorithm 2}.\quad
If a tree attains the minimum $R_f$ value on $\mathcal{T}_D$, then it must be constructible by Algorithm 2, according to Lemma \ref{algo_2_construction_lemma}. Thus, in order to complete the proof, it is sufficient to demonstrate that each tree constructible by Algorithm 2 must also attain the minimum $R_f$ value on $\mathcal{T}_D$. Similarly to the proof of the validity of Algorithm 1, we note that there are finitely many trees up to isomorphism with a given degree sequence. For this reason, the minimum of $R_f$ must be achieved by some tree and this tree must be constructible by Algorithm 2, due to Lemma \ref{algo_2_construction_lemma}. So, in order to prove that Algorithm 2 only produces trees that minimize $R_f$, we just need to show that any two trees constructed by Algorithm 2 have the same $R_f$.

Let $T, S$ be constructed by Algorithm 2 and let $x_1 y_1, \ldots, x_{n-1} y_{n-1}$ be the edges of $T$ and $z_1 t_1, \ldots, z_{n-1} t_{n-1}$ be the edges of $S$ in the order in which they were added in Algorithm 2 to construct $T, S$, respectively. It is enough to show that, for each $k = \overline{1, n-1}$, we have $\{\deg_T(x_k), \deg_T(y_k)\} = \{\deg_S(z_k), \deg_S(t_k)\}$, since this would clearly indicate $R_f(T) = R_f(S)$, by virtue of Eq.\ (\ref{rf_def}). Now, for any tree $H$ constructed by Algorithm 2, we will denote $Y_{j,k}(H)$ to be the total sum of availabilities of all vertices of desired degree $j$ after $k$ edges have been added. In order to finalize the proof, it becomes sufficient to show that, for a fixed value $k \in \overline{0, n-1}$, $Y_{j, k}(H)$ is the same for all the trees $H$ and all the values of $j$. We shall prove this by induction.

Note that the base case for $k=0$ is true. Suppose that the statement is true up to some $k$, then notice that $Y_{j,k}(S) = Y_{j,k}(T)$. If $a = b$, then it immediately follows that all the edges to be added throughout the rest of the algorithm will necessarily have endpoints whose desired degrees will all be equal to $a$. For this reason, it is clear that $Y_{j,k+1}(S) = Y_{j,k+1}(T)$ will hold for each $j$, and there is nothing left to discuss. Now, suppose that $a < b$. We call an edge \textit{spanning} if its endpoints have degrees $a$ and $b$.  By virtue of Lemma \ref{behavior_lemma}, there is always a cleaf of degree $a$ or $b$ and, thus, the algorithm allows us to only add spanning edges unless there is no MMM component and all the uniform components of degrees $a$ and $b$ are cleaves.

Now consider what happens after adding the first $k$ edges in both $T$ and $S$, for some $k < n - 1$. If we suppose that $Y_{j,k+1}(S) = Y_{j,k+1}(T)$ does not hold for each $j$, we may assume without loss of generality that in $T$, there is no MMM component and all the uniform components of degrees $a$ or $b$ in $T$ are cleaves. We now point out that for any $a < g < b$, the number of uniform components of degree $g$ must be the same in both $T$ and $S$ as at the start of the algorithm after the vertices have been added but the edges have not. To verify this, we observe that Lemma \ref{behavior_lemma} dictates that such uniform components necessarily stay uniform up until $k$ edges have been added. Moreover, the only way for such a component to disappear is if it represents a cleaf which is then merged into another uniform component whose degree is not greater than $a$. However, this is not possible, since in this scenario, the other component would necessarily not be a cleaf, hence it could be merged with one of the components which have a PA vertex whose desired degree is at least $b$. Thus, the said edge addition would not be in accordance with Algorithm~2, which is not possible.

Thus, after $k$ edges have been added, both $S$ and $T$ need to have the same number of uniform components whose degree is $g$, where $a < g < b$. Besides that, the total number of components must be $n-k$ in both trees. Thus, the total number of components containing only PA vertices of desired degrees $a$ and $b$ is the same for $T$ and $S$. It is not difficult to notice that this can only happen if all components containing PA vertices with desired degrees $a$ and $b$ are cleaves in both $T$ and $S$. Now, we can see that neither $x_{k+1} y_{k+1}$ nor $z_{k+1} t_{k+1}$ are spanning. Without loss of generality, let $\deg_T(x_{k+1}) = \deg_S(z_{k+1}) = a$. We further have that $\deg_T(y_k)$ is equal to the smallest $j$, $a \le j < b$ such that $Y_{j,k}(T)$ is strictly larger than the number of uniform components of degree $j$. However, this value is the same for $S$ and $T$, hence $\deg_T(y_k) = \deg_S(t_k)$, which completes the proof. \hfill\qed

\section{Conclusion}\label{conclusion}

Theorem \ref{main_theorem} offers a complete solution set for both the $R_f$ maximization and $R_f$ minimization problem on $\mathcal{T}_D$ whenever the discrete symmetric function $f$ is a strict positive polarity or strict negative polarity function. This result makes a substantial contribution to the field of chemical graph theory due to the sheer fact that many adjacent vertex degree based topological indices are yielded by such functions $f$. For example, by analyzing the topological indices displayed in Table~\ref{ti_examples}, it is straightforward to deduce that a tree $T \in \mathcal{T}_D$ is constructible by Algorithm 1 (Algorithm 2) if and only if it maximizes (minimizes) the Randi\'c index, second Zagreb index, second modified Zagreb index, harmonic index and sum--connectivity index, and if and only if it minimizes (maximizes) the atom--bond connectivity index and Sombor index.

Furthermore, Theorem \ref{main_theorem} offers a partial solution to the $R_f$ maximization and $R_f$ minimization problem on $\mathcal{T}_D$ in the scenario when $f$ is a non-strict positive polarity or negative polarity function. The corresponding results once again represent an improvement over the theorem obtained by Wang \cite[Theorem 1.1]{Wang}. In fact, it is not that convenient to make further analysis for non-strict positive polarity or negative polarity functions since it is very difficult to deduce whether a tree not constructible by Algorithm 1 or 2 is extremal. For example, the first Zagreb index
\[
    M_1(G) = \sum_{u \sim v} \left( \deg_G(u) + \deg_G(v) \right)
\]
can be alternatively written as
\[
    M_1(G) = \sum_{u} \deg_G(u)^2,
\]
where the summing is done over all the graph vertices $u$. From here, it immediately follows that all the trees from $\mathcal{T}_D$ have the same first Zagreb index, hence each tree must be an extremal tree. For this reason, attempting to determine the full solution set for the corresponding extremal problems is very challenging without making additional assumptions regarding the behavior of $f$. The same can be said for the case when $f$ is neither a positive polarity nor a negative polarity function, such as the function $f$ yielding the geometric--arithmetic index, as depicted in Table \ref{ti_examples}.

We finish the paper by giving a brief example of how Theorem \ref{main_theorem} can be used on a concrete valid degree sequence in order to yield the complete solution for the $R_f$ maximization and $R_f$ minimization problem in the case that $f$ is a strict positive polarity function. Let $D_1 = (4, 4, 3, 3, 2, 1, 1, 1, 1, 1, 1, 1, 1)$. By implementing the theorem derived by Wang \cite[Theorem 1.1]{Wang}, we conclude that the greedy tree depicted in Figure \ref{greedy_wang} surely attains the maximum $R_f$ value on $\mathcal{T}_{D_1}$.

However, if we apply Theorem \ref{main_theorem}, we are able to obtain a much stronger result. More precisely, we conclude that some tree $T$ attains the maximum $R_f$ value on $\mathcal{T}_{D_1}$ if and only if it belongs to one of the three isomorphism classes shown in Figure \ref{main_example}. This observation is straightforward to notice --- the trees constructible by Algorithm 1 are precisely such that the two vertices of degree four are adjacent, while each vertex of degree three or two must have a neighbor of degree four. Thus, there essentially exist exactly three different trees that attain the maximum $R_f$ value on $\mathcal{T}_{D_1}$, with the greedy tree obtained by Wang corresponding to the tree given in Figure \ref{first_sol}.

Now, let $D_2 = (8, 7, 6, 6, 5, 5, 3, 3, 3, 2, \underbrace{1, 1, \ldots, 1}_{\mbox{30 ones}})$. If we run Algorithm 2 and use Theorem \ref{main_theorem}, it is possible to obtain nine different isomorphism classes which represent the complete solution set to the $R_f$ minimization problem on $\mathcal{T}_{D_2}$, provided $f$ is a strict positive polarity function. All of these isomorphism classes are depicted in Figure \ref{algo_2_example}. On the other hand, the alternating greedy tree construction given by Wang yields only two isomorphism classes \cite[Figure 7]{Wang} that correspond to the trees given in Figures \ref{algo_2_example_e} and \ref{algo_2_example_h}. From here, it becomes clear that Theorem \ref{main_theorem} represents a substantial improvement over the aforementioned earlier construction mechanism.

\begin{figure}[H]
    \centering
    \begin{tikzpicture}
        \node[state, minimum size=1cm, thick] (1) at (0.0, 0) {$6$};
        \node[state, minimum size=1cm, thick] (2) at (1.5, 0) {$7$};
        \node[state, minimum size=1cm, thick] (3) at (3.0, 0) {$8$};
        \node[state, minimum size=1cm, thick] (4) at (4.5, 0) {$9$};
        \node[state, minimum size=1cm, thick] (5) at (6.0, 0) {$10$};
        \node[state, minimum size=1cm, thick] (6) at (7.5, 0) {$11$};
        \node[state, minimum size=1cm, thick] (7) at (9.0, 0) {$12$};
        \node[state, minimum size=1cm, thick] (8) at (10.5, 0) {$13$};
        
        \node[state, minimum size=1cm, thick] (9) at (1.5, 2) {$2$};
        \node[state, minimum size=1cm, thick] (10) at (5.25, 2) {$3$};
        \node[state, minimum size=1cm, thick] (11) at (8.25, 2) {$4$};
        \node[state, minimum size=1cm, thick] (12) at (10.5, 2) {$5$};

        \node[state, minimum size=1cm, thick] (13) at (6.75, 4) {$1$};

        \path[thick] (1) edge (9);
        \path[thick] (2) edge (9);
        \path[thick] (3) edge (9);
        \path[thick] (4) edge (10);
        \path[thick] (5) edge (10);
        \path[thick] (6) edge (11);
        \path[thick] (7) edge (11);
        \path[thick] (8) edge (12);
        \path[thick] (9) edge (13);
        \path[thick] (10) edge (13);
        \path[thick] (11) edge (13);
        \path[thick] (12) edge (13);
    \end{tikzpicture}
    \caption{The greedy tree for $D_1 = (4, 4, 3, 3, 2, 1, 1, 1, 1, 1, 1, 1, 1)$.}
    \label{greedy_wang}
\end{figure}
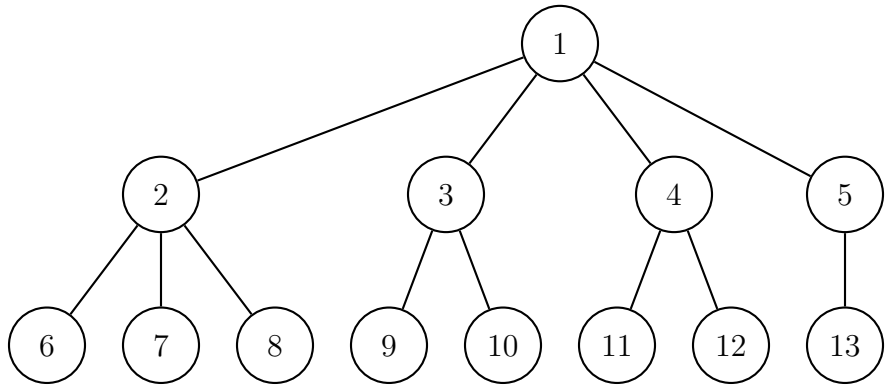

\begin{figure}[H]
    \centering
    \subfloat[] {
        \begin{tikzpicture}
            \node[state, minimum size=0.25cm, thick, fill=black] (1) at (0.0, 0) {$ $};
            \node[state, minimum size=0.25cm, thick, fill=black] (2) at (1.2, 0) {$ $};
            \node[state, minimum size=0.25cm, thick, fill=red] (3) at (0, 1.2) {$ $};
            \node[state, minimum size=0.25cm, thick, fill=red] (4) at (0, -1.2) {$ $};
            \node[state, minimum size=0.25cm, thick, fill=yellow] (5) at (-1.2, 0) {$ $};
            
            \node[state, minimum size=0.25cm, thick] (6) at (2.4, 0) {$ $};
            \node[state, minimum size=0.25cm, thick] (7) at (1.2, 1.2) {$ $};
            \node[state, minimum size=0.25cm, thick] (8) at (1.2, -1.2) {$ $};
            
            \node[state, minimum size=0.25cm, thick] (9) at (0, 2.4) {$ $};
            \node[state, minimum size=0.25cm, thick] (10) at (-1.2, 1.2) {$ $};
            \node[state, minimum size=0.25cm, thick] (11) at (0, -2.4) {$ $};
            \node[state, minimum size=0.25cm, thick] (12) at (-1.2, -1.2) {$ $};
    
            \node[state, minimum size=0.25cm, thick] (13) at (-2.4, 0) {$ $};
    
            \path[thick] (1) edge (2);
            \path[thick] (1) edge (3);
            \path[thick] (1) edge (4);
            \path[thick] (1) edge (5);
            \path[thick] (2) edge (6);
            \path[thick] (2) edge (7);
            \path[thick] (2) edge (8);
            \path[thick] (3) edge (9);
            \path[thick] (3) edge (10);
            \path[thick] (4) edge (11);
            \path[thick] (4) edge (12);
            \path[thick] (5) edge (13);
        \end{tikzpicture}
        \label{first_sol}
    }
    \hspace{1.0cm}
    \subfloat[] {
        \begin{tikzpicture}
            \node[state, minimum size=0.25cm, thick, fill=black] (1) at (0.0, 0) {$ $};
            \node[state, minimum size=0.25cm, thick, fill=black] (2) at (1.2, 0) {$ $};
            \node[state, minimum size=0.25cm, thick, fill=red] (3) at (0, 1.2) {$ $};
            \node[state, minimum size=0.25cm, thick, fill=red] (4) at (0, -1.2) {$ $};
            \node[state, minimum size=0.25cm, thick] (5) at (-1.2, 0) {$ $};
            
            \node[state, minimum size=0.25cm, thick, fill=yellow] (6) at (2.4, 0) {$ $};
            \node[state, minimum size=0.25cm, thick] (7) at (1.2, 1.2) {$ $};
            \node[state, minimum size=0.25cm, thick] (8) at (1.2, -1.2) {$ $};
            
            \node[state, minimum size=0.25cm, thick] (9) at (0, 2.4) {$ $};
            \node[state, minimum size=0.25cm, thick] (10) at (-1.2, 1.2) {$ $};
            \node[state, minimum size=0.25cm, thick] (11) at (0, -2.4) {$ $};
            \node[state, minimum size=0.25cm, thick] (12) at (-1.2, -1.2) {$ $};
    
            \node[state, minimum size=0.25cm, thick] (13) at (3.6, 0) {$ $};
    
            \path[thick] (1) edge (2);
            \path[thick] (1) edge (3);
            \path[thick] (1) edge (4);
            \path[thick] (1) edge (5);
            \path[thick] (2) edge (6);
            \path[thick] (2) edge (7);
            \path[thick] (2) edge (8);
            \path[thick] (3) edge (9);
            \path[thick] (3) edge (10);
            \path[thick] (4) edge (11);
            \path[thick] (4) edge (12);
            \path[thick] (6) edge (13);
        \end{tikzpicture}
    }
    \\
    \subfloat[] {
        \begin{tikzpicture}
            \node[state, minimum size=0.25cm, thick, fill=black] (1) at (0.0, 0) {$ $};
            \node[state, minimum size=0.25cm, thick, fill=black] (2) at (1.2, 0) {$ $};
            \node[state, minimum size=0.25cm, thick, fill=red] (3) at (0, 1.2) {$ $};
            \node[state, minimum size=0.25cm, thick] (4) at (0, -1.2) {$ $};
            \node[state, minimum size=0.25cm, thick, fill=yellow] (5) at (-1.2, 0) {$ $};
            
            \node[state, minimum size=0.25cm, thick] (6) at (2.4, 0) {$ $};
            \node[state, minimum size=0.25cm, thick, fill=red] (7) at (1.2, 1.2) {$ $};
            \node[state, minimum size=0.25cm, thick] (8) at (1.2, -1.2) {$ $};
            
            \node[state, minimum size=0.25cm, thick] (9) at (0, 2.4) {$ $};
            \node[state, minimum size=0.25cm, thick] (10) at (-1.2, 1.2) {$ $};
            \node[state, minimum size=0.25cm, thick] (11) at (1.2, 2.4) {$ $};
            \node[state, minimum size=0.25cm, thick] (12) at (2.4, 1.2) {$ $};
    
            \node[state, minimum size=0.25cm, thick] (13) at (-1.2, -1.2) {$ $};
    
            \path[thick] (1) edge (2);
            \path[thick] (1) edge (3);
            \path[thick] (1) edge (4);
            \path[thick] (1) edge (5);
            \path[thick] (2) edge (6);
            \path[thick] (2) edge (7);
            \path[thick] (2) edge (8);
            \path[thick] (3) edge (9);
            \path[thick] (3) edge (10);
            \path[thick] (7) edge (11);
            \path[thick] (7) edge (12);
            \path[thick] (5) edge (13);
        \end{tikzpicture}
    }
    \caption{All three isomorphism classes corresponding to the trees constructible by Algorithm 1 for $D_1 = (4, 4, 3, 3, 2, 1, 1, 1, 1, 1, 1, 1, 1)$. The vertices of degree four, three and two are colored in black, red and yellow, respectively.}
    \label{main_example}
\end{figure}
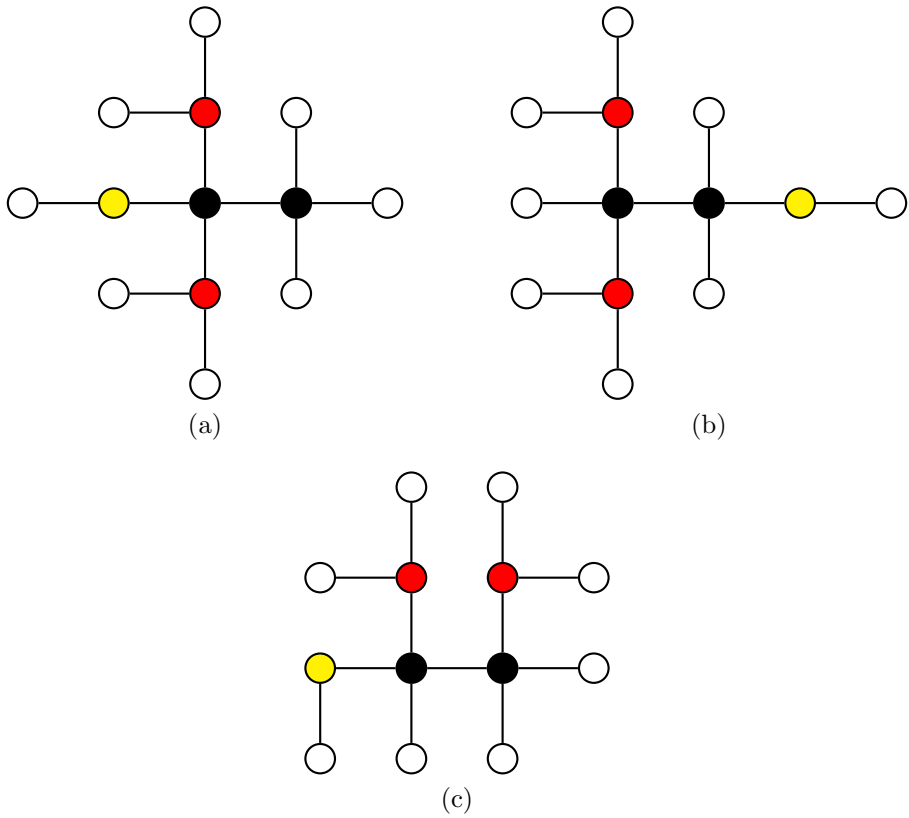

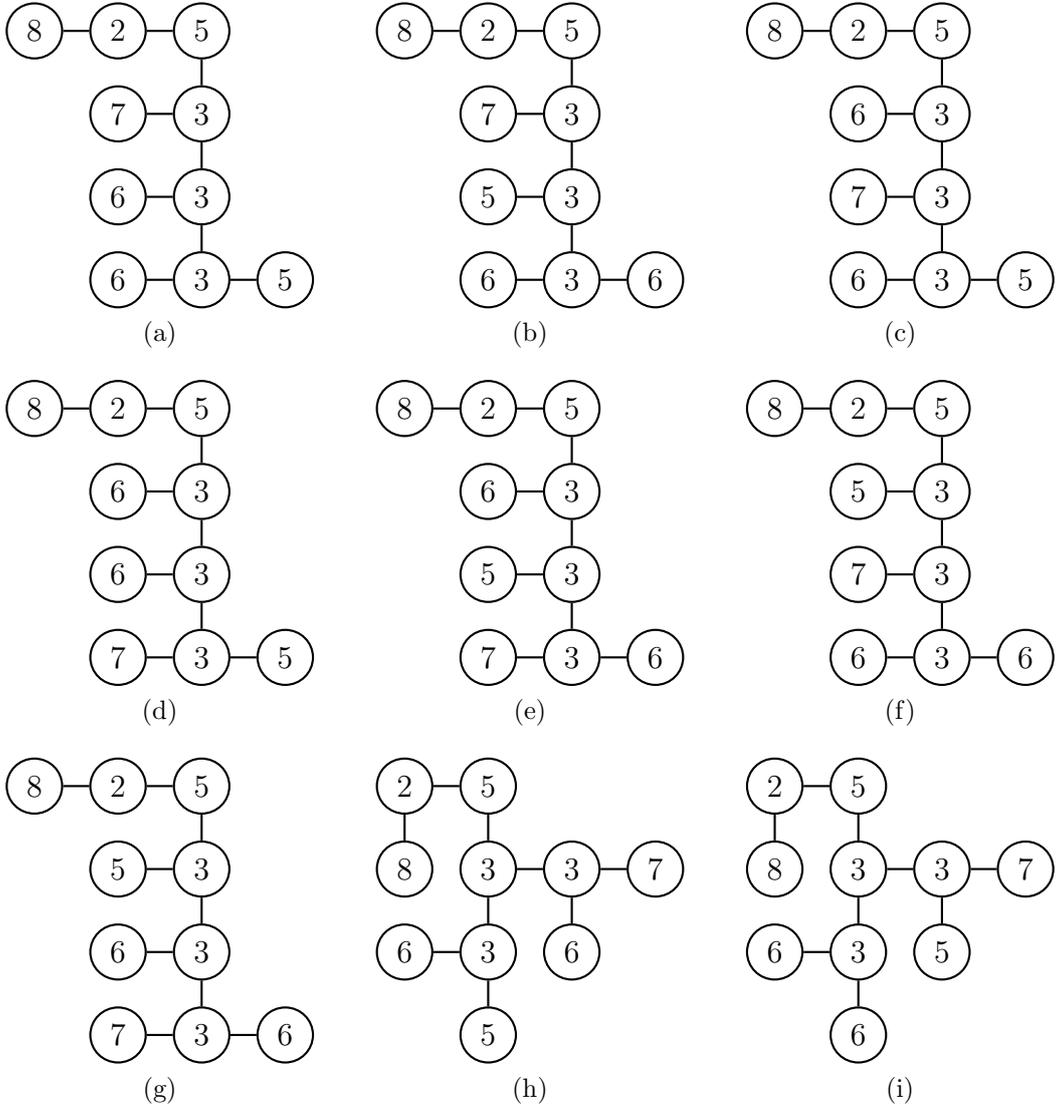
\begin{figure}
    \centering
    \subfloat[] {
        \begin{tikzpicture}
            \node[state, minimum size=0.25cm, thick] (1) at (0, 0) {$8$};
            \node[state, minimum size=0.25cm, thick] (2) at (1.1, 0) {$2$};
            \node[state, minimum size=0.25cm, thick] (3) at (2.2, 0) {$5$};
            \node[state, minimum size=0.25cm, thick] (4) at (2.2, -1.1) {$3$};
            \node[state, minimum size=0.25cm, thick] (5) at (2.2, -2.2) {$3$};
            \node[state, minimum size=0.25cm, thick] (6) at (2.2, -3.3) {$3$};

            \node[state, minimum size=0.25cm, thick] (7) at (1.1, -1.1) {$7$};
            \node[state, minimum size=0.25cm, thick] (8) at (1.1, -2.2) {$6$};
            \node[state, minimum size=0.25cm, thick] (9) at (1.1, -3.3) {$6$};
            \node[state, minimum size=0.25cm, thick] (10) at (3.3, -3.3) {$5$};
            
            \path[thick] (1) edge (2);
            \path[thick] (2) edge (3);
            \path[thick] (3) edge (4);
            \path[thick] (4) edge (5);
            \path[thick] (5) edge (6);

            \path[thick] (4) edge (7);
            \path[thick] (5) edge (8);
            \path[thick] (6) edge (9);
            \path[thick] (6) edge (10);
        \end{tikzpicture}
    }
    \hspace{0.4cm}
    \subfloat[] {
        \begin{tikzpicture}
            \node[state, minimum size=0.25cm, thick] (1) at (0, 0) {$8$};
            \node[state, minimum size=0.25cm, thick] (2) at (1.1, 0) {$2$};
            \node[state, minimum size=0.25cm, thick] (3) at (2.2, 0) {$5$};
            \node[state, minimum size=0.25cm, thick] (4) at (2.2, -1.1) {$3$};
            \node[state, minimum size=0.25cm, thick] (5) at (2.2, -2.2) {$3$};
            \node[state, minimum size=0.25cm, thick] (6) at (2.2, -3.3) {$3$};

            \node[state, minimum size=0.25cm, thick] (7) at (1.1, -1.1) {$7$};
            \node[state, minimum size=0.25cm, thick] (8) at (1.1, -2.2) {$5$};
            \node[state, minimum size=0.25cm, thick] (9) at (1.1, -3.3) {$6$};
            \node[state, minimum size=0.25cm, thick] (10) at (3.3, -3.3)  {$6$};
            
            \path[thick] (1) edge (2);
            \path[thick] (2) edge (3);
            \path[thick] (3) edge (4);
            \path[thick] (4) edge (5);
            \path[thick] (5) edge (6);

            \path[thick] (4) edge (7);
            \path[thick] (5) edge (8);
            \path[thick] (6) edge (9);
            \path[thick] (6) edge (10);
        \end{tikzpicture}
    }
    \hspace{0.4cm}
    \subfloat[] {
        \begin{tikzpicture}
            \node[state, minimum size=0.25cm, thick] (1) at (0, 0) {$8$};
            \node[state, minimum size=0.25cm, thick] (2) at (1.1, 0) {$2$};
            \node[state, minimum size=0.25cm, thick] (3) at (2.2, 0) {$5$};
            \node[state, minimum size=0.25cm, thick] (4) at (2.2, -1.1) {$3$};
            \node[state, minimum size=0.25cm, thick] (5) at (2.2, -2.2) {$3$};
            \node[state, minimum size=0.25cm, thick] (6) at (2.2, -3.3) {$3$};

            \node[state, minimum size=0.25cm, thick] (7) at (1.1, -1.1) {$6$};
            \node[state, minimum size=0.25cm, thick] (8) at (1.1, -2.2) {$7$};
            \node[state, minimum size=0.25cm, thick] (9) at (1.1, -3.3) {$6$};
            \node[state, minimum size=0.25cm, thick] (10) at (3.3, -3.3)  {$5$};
            
            \path[thick] (1) edge (2);
            \path[thick] (2) edge (3);
            \path[thick] (3) edge (4);
            \path[thick] (4) edge (5);
            \path[thick] (5) edge (6);

            \path[thick] (4) edge (7);
            \path[thick] (5) edge (8);
            \path[thick] (6) edge (9);
            \path[thick] (6) edge (10);
        \end{tikzpicture}
    }\\
    \subfloat[] {
        \begin{tikzpicture}
            \node[state, minimum size=0.25cm, thick] (1) at (0, 0) {$8$};
            \node[state, minimum size=0.25cm, thick] (2) at (1.1, 0) {$2$};
            \node[state, minimum size=0.25cm, thick] (3) at (2.2, 0) {$5$};
            \node[state, minimum size=0.25cm, thick] (4) at (2.2, -1.1) {$3$};
            \node[state, minimum size=0.25cm, thick] (5) at (2.2, -2.2) {$3$};
            \node[state, minimum size=0.25cm, thick] (6) at (2.2, -3.3) {$3$};

            \node[state, minimum size=0.25cm, thick] (7) at (1.1, -1.1) {$6$};
            \node[state, minimum size=0.25cm, thick] (8) at (1.1, -2.2) {$6$};
            \node[state, minimum size=0.25cm, thick] (9) at (1.1, -3.3) {$7$};
            \node[state, minimum size=0.25cm, thick] (10) at (3.3, -3.3) {$5$};
            
            \path[thick] (1) edge (2);
            \path[thick] (2) edge (3);
            \path[thick] (3) edge (4);
            \path[thick] (4) edge (5);
            \path[thick] (5) edge (6);

            \path[thick] (4) edge (7);
            \path[thick] (5) edge (8);
            \path[thick] (6) edge (9);
            \path[thick] (6) edge (10);
        \end{tikzpicture}
    }
    \hspace{0.4cm}
    \subfloat[] {
        \begin{tikzpicture}
            \node[state, minimum size=0.25cm, thick] (1) at (0, 0) {$8$};
            \node[state, minimum size=0.25cm, thick] (2) at (1.1, 0) {$2$};
            \node[state, minimum size=0.25cm, thick] (3) at (2.2, 0) {$5$};
            \node[state, minimum size=0.25cm, thick] (4) at (2.2, -1.1) {$3$};
            \node[state, minimum size=0.25cm, thick] (5) at (2.2, -2.2) {$3$};
            \node[state, minimum size=0.25cm, thick] (6) at (2.2, -3.3) {$3$};

            \node[state, minimum size=0.25cm, thick] (7) at (1.1, -1.1) {$6$};
            \node[state, minimum size=0.25cm, thick] (8) at (1.1, -2.2) {$5$};
            \node[state, minimum size=0.25cm, thick] (9) at (1.1, -3.3) {$7$};
            \node[state, minimum size=0.25cm, thick] (10) at (3.3, -3.3) {$6$};
            
            \path[thick] (1) edge (2);
            \path[thick] (2) edge (3);
            \path[thick] (3) edge (4);
            \path[thick] (4) edge (5);
            \path[thick] (5) edge (6);

            \path[thick] (4) edge (7);
            \path[thick] (5) edge (8);
            \path[thick] (6) edge (9);
            \path[thick] (6) edge (10);
        \end{tikzpicture}
        \label{algo_2_example_e}
    }
    \hspace{0.4cm}
    \subfloat[] {
        \begin{tikzpicture}
            \node[state, minimum size=0.25cm, thick] (1) at (0, 0) {$8$};
            \node[state, minimum size=0.25cm, thick] (2) at (1.1, 0) {$2$};
            \node[state, minimum size=0.25cm, thick] (3) at (2.2, 0) {$5$};
            \node[state, minimum size=0.25cm, thick] (4) at (2.2, -1.1) {$3$};
            \node[state, minimum size=0.25cm, thick] (5) at (2.2, -2.2) {$3$};
            \node[state, minimum size=0.25cm, thick] (6) at (2.2, -3.3) {$3$};

            \node[state, minimum size=0.25cm, thick] (7) at (1.1, -1.1) {$5$};
            \node[state, minimum size=0.25cm, thick] (8) at (1.1, -2.2) {$7$};
            \node[state, minimum size=0.25cm, thick] (9) at (1.1, -3.3) {$6$};
            \node[state, minimum size=0.25cm, thick] (10) at (3.3, -3.3) {$6$};
            
            \path[thick] (1) edge (2);
            \path[thick] (2) edge (3);
            \path[thick] (3) edge (4);
            \path[thick] (4) edge (5);
            \path[thick] (5) edge (6);

            \path[thick] (4) edge (7);
            \path[thick] (5) edge (8);
            \path[thick] (6) edge (9);
            \path[thick] (6) edge (10);
        \end{tikzpicture}
    }\\
    \subfloat[] {
        \begin{tikzpicture}
            \node[state, minimum size=0.25cm, thick] (1) at (0, 0) {$8$};
            \node[state, minimum size=0.25cm, thick] (2) at (1.1, 0) {$2$};
            \node[state, minimum size=0.25cm, thick] (3) at (2.2, 0) {$5$};
            \node[state, minimum size=0.25cm, thick] (4) at (2.2, -1.1) {$3$};
            \node[state, minimum size=0.25cm, thick] (5) at (2.2, -2.2) {$3$};
            \node[state, minimum size=0.25cm, thick] (6) at (2.2, -3.3) {$3$};

            \node[state, minimum size=0.25cm, thick] (7) at (1.1, -1.1) {$5$};
            \node[state, minimum size=0.25cm, thick] (8) at (1.1, -2.2) {$6$};
            \node[state, minimum size=0.25cm, thick] (9) at (1.1, -3.3) {$7$};
            \node[state, minimum size=0.25cm, thick] (10) at (3.3, -3.3) {$6$};
            
            \path[thick] (1) edge (2);
            \path[thick] (2) edge (3);
            \path[thick] (3) edge (4);
            \path[thick] (4) edge (5);
            \path[thick] (5) edge (6);

            \path[thick] (4) edge (7);
            \path[thick] (5) edge (8);
            \path[thick] (6) edge (9);
            \path[thick] (6) edge (10);
        \end{tikzpicture}
    }
    \hspace{0.4cm}
    \subfloat[] {
        \begin{tikzpicture}
            \node[state, minimum size=0.25cm, thick] (1) at (0, -2.2) {$8$};
            \node[state, minimum size=0.25cm, thick] (2) at (0, -1.1) {$2$};
            \node[state, minimum size=0.25cm, thick] (3) at (1.1, -1.1) {$5$};
            \node[state, minimum size=0.25cm, thick] (4) at (1.1, -2.2) {$3$};
            \node[state, minimum size=0.25cm, thick] (5) at (2.2, -2.2) {$3$};
            \node[state, minimum size=0.25cm, thick] (6) at (1.1, -3.3) {$3$};

            \node[state, minimum size=0.25cm, thick] (7) at (3.3, -2.2) {$7$};
            \node[state, minimum size=0.25cm, thick] (8) at (2.2, -3.3) {$6$};
            \node[state, minimum size=0.25cm, thick] (9) at (0, -3.3) {$6$};
            \node[state, minimum size=0.25cm, thick] (10) at (1.1, -4.4) {$5$};
            
            \path[thick] (1) edge (2);
            \path[thick] (2) edge (3);
            \path[thick] (3) edge (4);
            \path[thick] (4) edge (5);
            \path[thick] (4) edge (6);

            \path[thick] (5) edge (7);
            \path[thick] (5) edge (8);
            \path[thick] (6) edge (9);
            \path[thick] (6) edge (10);
        \end{tikzpicture}
        \label{algo_2_example_h}
    }
    \hspace{0.4cm}
    \subfloat[] {
        \begin{tikzpicture}
            \node[state, minimum size=0.25cm, thick] (1) at (0, -2.2) {$8$};
            \node[state, minimum size=0.25cm, thick] (2) at (0, -1.1) {$2$};
            \node[state, minimum size=0.25cm, thick] (3) at (1.1, -1.1) {$5$};
            \node[state, minimum size=0.25cm, thick] (4) at (1.1, -2.2) {$3$};
            \node[state, minimum size=0.25cm, thick] (5) at (2.2, -2.2) {$3$};
            \node[state, minimum size=0.25cm, thick] (6) at (1.1, -3.3) {$3$};

            \node[state, minimum size=0.25cm, thick] (7) at (3.3, -2.2)  {$7$};
            \node[state, minimum size=0.25cm, thick] (8) at (2.2, -3.3) {$5$};
            \node[state, minimum size=0.25cm, thick] (9) at (0, -3.3) {$6$};
            \node[state, minimum size=0.25cm, thick] (10) at (1.1, -4.4) {$6$};
            
            \path[thick] (1) edge (2);
            \path[thick] (2) edge (3);
            \path[thick] (3) edge (4);
            \path[thick] (4) edge (5);
            \path[thick] (4) edge (6);

            \path[thick] (5) edge (7);
            \path[thick] (5) edge (8);
            \path[thick] (6) edge (9);
            \path[thick] (6) edge (10);
        \end{tikzpicture}
    }
    \caption{All nine isomorphism classes corresponding to the trees constructible by Algorithm 2 for $D_2 = (8, 7, 6, 6, 5, 5, 3, 3, 3, 2, 1, 1, \ldots, 1)$. Each vertex is labelled by its degree and all the leaves are left out for the sake of brevity.}
    \label{algo_2_example}
\end{figure}

\section*{Acknowledgements}

The authors would like to express their gratitude to Imre Leader for all of his supporting comments and remarks.

\section*{Conflict of interest}

The authors declare that they have no conflict of interest.

\end{document}